\newcommand{\R}{\mathbb{R}}
\newcommand{\C}{\mathbb{C}}
\newcommand{\herm}[1]{H_{#1}(\C)}
\newcommand{\CP}{\mathbb{C}P}
\newcommand{\tr}{\mathrm{tr}\,}
\newcommand{\re}{\mathrm{Re}\,}
\newcommand{\im}{\mathrm{Im}\,}
\renewcommand{\span}{\text{Span}\,}
\newcommand{\inter}{\text{int}\,}
\newcommand{\conv}{\text{Conv}}
\newtheorem{theorem}{Theorem}
\newtheorem{lemma}{Lemma}
\theoremstyle{definition}
\newtheorem*{remark}{Remark}
\begin{document}
\title[Continuous Selections of the Inverse Numerical Range Map]{Continuous Selections of the Inverse Numerical Range Map}
\author[B. Lins, P. Parihar]{Brian Lins$^*$, Parth Parihar}
\date{}
\address{Brian Lins, Hampden-Sydney College}
\email{blins@hsc.edu}
\thanks{$^*$Corresponding author.}
\address{Parth Parihar, Princeton University}
\email{pspariha@princeton.edu}
\subjclass[2000]{Primary 15A60, 47A12; Secondary 54C08}
\keywords{Field of values; numerical
range; inverse continuity; continuous selections}
\thanks{This work was partially supported by NSF grant DMS-0751964}

\begin{abstract}
For a complex $n$-by-$n$ matrix $A$, the numerical range $F(A)$ is the range of the map $f_A(x) = x^*A x$ acting on the unit sphere in $\C^n$. We ask whether the multivalued inverse numerical range map $f_A^{-1}$ has a continuous single-valued selection defined on all or part of $F(A)$.  We show that for a large class of matrices, $f_A^{-1}$ does have a continuous selection on $F(A)$. For other matrices, $f_A^{-1}$ has a continuous selection defined everywhere on $F(A)$ except in the vicinity of a finite number of exceptional points on the boundary of $F(A)$.  
\end{abstract}
\maketitle

\section{Introduction}

The \textit{numerical range} (also known as the \textit{field of values}) $F(A)$ of a matrix $A \in M_n(\C)$ is the image of the complex unit sphere $\C S^n = \{x \in \C^n : x^*x = 1\}$ under the map $f_A(x) = x^*Ax$. Since the map $f_A(x)$ is continuous, the numerical range is always a compact, connected subset of $\C$.  It contains the eigenvalues of $A$ and is convex \cite{Davis71}.  These properties make the numerical range a useful tool in applications and within linear algebra.   

Recently, several papers have studied the pre-images of a complex number $z \in F(A)$ under the map $f_A$. For any unimodular constant $\omega \in \C$ and $x \in \C S^n$, $f_A(\omega x) = f_A(x)$. It follows that the pre-images of $f_A$ will always be trivially multivalued. In fact, $f_A^{-1}(z)$ contains a set of $n$ linearly independent vectors for every $z \in \inter F(A)$ \cite[Theorem 1]{Carden09}.  Algorithms for computing at least one element of $f_A^{-1}(z)$ are presented in \cite{Carden09,CPU2010,Meurant2012,Uhlig2008}. 

As a multivalued map, there are several possible notions of continuity that could apply to the inverse numerical range map $f_A^{-1}$.  In \cite{CJKLS2} the following definitions were introduced.  Let $g$ be a multivalued mapping from a metric space $(X,d_X)$ to a metric space $(Y,d_Y)$.  We say that $g$ is \textit{weakly continuous} at $x \in X$ if for all sequences $x_k \rightarrow x$ in $X$, there exists $y \in g(x)$ and a sequence $y_k \in g(x_k)$ such that $y_k \rightarrow y$.  If such sequences exist for all $y \in g(x)$, then $g$ is \textit{strongly continuous} at $x$. Alternatively, $f_A^{-1}$ is weakly (strongly) continuous at $z\in F(A)$ if the direct mapping $f_A$ is open with respect to the relative topology on $F(A)$ at some (resp., all) pre-images $x\in f_A^{-1}(z)$. In \cite{CJKLS2}, it was shown that the inverse field of values map is strongly continuous on the interior of $F(A)$, and that strong continuity can only fail at so-called round points of the boundary. Necessary and sufficient conditions for weak and strong continuity of $f_A^{-1}$ are given in \cite{LLS2}. In particular, strong (and therefore weak) continuity can only fail at finitely many points on the boundary \cite[Corollary 2.3]{LLS2}.

Related notions of continuity for multivalued maps include the notions of upper and lower semi-continuity \cite{Michael1}.  As the inverse of a continuous single-valued function, $f_A^{-1}$ is automatically upper semi-continuous. In our terminology, strong continuity is equivalent to lower semi-continuity.  

Given any multivalued function $g:X \rightrightarrows Y$, we may also ask whether there exists a continuous single valued function $h: X \rightarrow Y$ such that $h(x) \in g(x)$ for all $x \in X$.  Such a function is called a \textit{continuous selection} of $g$. There are several general theorems due to Michael \cite{Michael1,Michael2} concerning whether upper and lower semi-continuous multivalued functions admit a continuous single-valued selection.  These theorems require additional convexity or connectedness assumptions in order to apply.  In the case of the map $f_A^{-1}$ the convexity assumptions do not apply, and the connectedness assumptions are difficult to verify. By the definition of weak continuity, if $G$ is a relatively open subset of $F(A)$ containing a point where $f_A^{-1}$ is not weakly continuous, then it is not possible to define a continuous selection of $f_A^{-1}$ on $G$. 

The main result of this paper is the following theorem.  

\begin{theorem} \label{thm:main}
Let $A \in M_n(\C)$. If $f_A^{-1}$ has no weak continuity failures on $F(A)$, then there is a continuous selection of $f_A^{-1}$ on all of $F(A)$.  If $f_A^{-1}$ has weak continuity failures at $w_1, \ldots, w_k \in \partial F(A)$, then for any open set $G$ containing $\{w_1, \ldots, w_k\}$ there is a continuous selection of $f_A^{-1}$ on $F(A) \backslash G$.  
\end{theorem}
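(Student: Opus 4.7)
Let $K$ denote the compact domain of interest: $K = F(A)$ in the first case, and (a closed version of) $F(A) \setminus G$ in the second. In either case $f_A^{-1}$ is weakly continuous at every point of $K$. My plan is to build the selection in two stages, first along $\partial F(A) \cap K$, then extend radially inward.

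For the boundary stage, I would invoke the standard structural description of the numerical range: at a smooth, strictly convex point $z \in \partial F(A)$ with outward normal direction $e^{i\theta}$, the fiber $f_A^{-1}(z)$ consists of unit vectors in the top eigenspace of the Hermitian matrix $\re (e^{-i\theta}A)$, and this eigenspace varies continuously in $\theta$ away from a finite set of exceptional angles. By \cite[Corollary 2.3]{LLS2}, every weak (and strong) continuity failure of $f_A^{-1}$ occurs at one of these exceptional angles. On each sub-arc of $\partial F(A) \cap K$ between consecutive exceptional angles, a continuous choice of unit top-eigenvectors yields a continuous partial selection; weak continuity at retained endpoints extends the partial selections continuously across them, and corners and flat portions of $\partial F(A)$ are handled by the same principle applied to the appropriate multi-dimensional eigenspace. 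Concatenating the pieces yields a continuous map $h_\partial \colon \partial F(A) \cap K \to \C S^n$ with $f_A \circ h_\partial = \mathrm{id}$.

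For the interior stage, fix a reference point $c \in \inter F(A)$ and a preimage $x_c \in f_A^{-1}(c)$. Given $z \in K$, the segment $[c,z]$ lies in $\inter F(A)$ except possibly at its endpoint $z$. Since $f_A^{-1}$ is strongly continuous on $\inter F(A)$ (hence $f_A$ is open there), path-lifting produces a continuous curve $\gamma_z \colon [c,z] \to \C S^n$ with $\gamma_z(c) = x_c$, $f_A \circ \gamma_z = \mathrm{id}$, and (when $z \in \partial F(A) \cap K$) $\gamma_z(z) = h_\partial(z)$, the boundary value being pinned down by weak continuity at $z$. Setting $h(w) := \gamma_z(w)$ for $w$ on the segment from $c$ to $z$ defines a candidate selection on all of $K$.

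The main obstacle is joint continuity of the resulting $h$. The lifts $\gamma_z$ are not canonical --- each fiber has a residual $S^1$ phase freedom that permits continuous deformations of any given lift --- so they must be chosen coherently as $z$ varies. I would try to resolve this either by covering $K$ with finitely many neighborhoods on which explicit local selections exist (via strong continuity on $\inter F(A)$ and the boundary selection near $\partial F(A) \cap K$) and gluing with a partition-of-unity argument that exploits the phase action on fibers, or by verifying the $LC^1$-connectivity hypothesis of Michael's finite-dimensional selection theorem for the fibers of $f_A$ over $\inter F(A)$ (applicable because $\dim K \le 2$). Either route reduces joint continuity to a structural property of $f_A^{-1}$; I expect this to be the technical crux of the argument.
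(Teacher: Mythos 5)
Your boundary stage tracks the paper's Lemmas~\ref{lem:analyticArcs} and~\ref{lem:bdrySelect} in spirit, but your claim that ``weak continuity at retained endpoints extends the partial selections continuously across them'' is not automatic. Weak continuity only asserts existence of \emph{some} convergent selection sequence; it does not force the left-hand limit of one analytic eigenvector branch to agree with the right-hand limit of the next. The paper needs the precise splitting structure of Theorem~\ref{thm:characterization}: at even-degree splits the spectral projections $P_k$ and $P_{k-1}$ literally coincide (so the two branches match for free), and at degree-one splits the flat portion supplies an explicit bridging curve \eqref{eq:p3}. Without invoking this structure you have not shown that the pieces concatenate continuously.

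The more serious gap is in your interior stage, and you correctly flag it: the path-lifts $\gamma_z$ are not canonical and you have not established joint continuity as $z$ varies. Unfortunately both of your proposed resolutions face known obstacles. Michael's selection theorems require convexity of the fibers or $LC^k$-connectedness; the paper's introduction explicitly notes that neither is available here (the fibers of $f_A^{-1}$ are non-convex subsets of $\C S^n$, and their connectedness properties are hard to pin down). A partition-of-unity gluing likewise needs some way to combine nearby local selections, and the $S^1$-phase freedom alone does not reconcile lifts that differ by more than phase, which can happen because the fibers over interior points are $(2n-1)$-dimensional manifolds, not circles. The paper sidesteps the whole problem by a completely different device: it radiates from a \emph{boundary} point $z_0$, writes each $z \in F(A)\setminus\{z_0\}$ uniquely as $\lambda z_0 + (1-\lambda) f_A(y(t))$ using the canonical boundary selection $y(t)$, and then writes down the \emph{explicit closed-form} preimage \eqref{eq:mainselect} coming from the Elliptical Range Theorem applied to the $2\times 2$ compression onto $\span\{x_0, y(t)\}$ (Lemmas~\ref{lem:2by2} and~\ref{lem:normal2by2}). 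Continuity of the formula in $(\lambda, t)$ is then manifest, so there is no gluing problem to solve. Your argument, as written, identifies the technical crux but does not surmount it, and would need either this explicit $2\times 2$ machinery or some genuinely new idea for reconciling the lifts.
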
 

Note that $f_A^{-1}$ is weakly continuous everywhere on $F(A)$ when $A$ is normal and when $n \le 3$ \cite[Corollaries 5 and 6, and Theorem 11]{CJKLS2}. For such matrices it follows that $f_A^{-1}$ has a continuous selection defined on all of $F(A)$. More generally, the set of matrices $A \in M_n(\C)$ for which $f_A^{-1}$ is strongly (and therefore also weakly) continuous on all of $F(A)$ is generic \cite[Proposition 2.4]{LLS2}.

\section{Preliminaries}
When considering the map $f_A$, it is natural to identify vectors that are scalar multiples in $\C S^n$.  Under this identification, $\C S^n$ becomes the complex projective space $\C P^{n-1}$. With the inclusion map $q: \C S^n \rightarrow \C P^{n-1}$, there is a unique map $\hat{f}_A: \C P^{n-1} \rightarrow F(A)$ that makes the diagram below commute.  

\begin{center}
\begin{tikzpicture}
\fill (0,0) node (A) {$\C S^n$};
\fill (5,0) node (B) {$F(A)$};
\fill (0,-3) node (C) {$\CP^{n-1}$};
\draw[thick,->] (A) to (B);
\draw[thick,->] (A) to (C);
\draw[thick,dashed,->] (C) to (B);
\draw (2.5,0) node[above] {$f$};
\draw (0,-1.5) node[left] {$q$};
\draw (2.5,-1.5) node[below right] {$\hat{f}$};
\end{tikzpicture}
\end{center}

Importantly, even the map $\hat{f}_A$ has multivalued pre-images since $f_A^{-1}(z)$ contains $n$ linearly independent vectors in $\C S^n$ when $z$ is in the interior of $F(A)$ \cite[Theorem 1]{Carden09}. On the boundary of $F(A)$, $\hat{f}_A^{-1}$ may be single or multivalued \cite{LLS1}. 

The complex projective space $\CP^{n-1}$ is homeomorphic to the set $\{xx^*:x \in \C S^n\}$ via the map $\varphi:[x] \rightarrow xx^*/x^*x$. It will be convenient to use $\{xx^*:x \in \C S^n\}$ as a representation for $\CP^{n-1}$, so when we write $\CP^{n-1}$, we mean the set $\{xx^*:x \in \C S^n\}$.  Thus, $\C P^{n-1} \subset H_n(\C)$, where $H_n(\C)$ denotes the set of $n$-by-$n$ complex Hermitian matrices.  Note that $\hat{f}_A(xx^*) = \tr(Axx^*)$, so $\hat{f}_A$ is the restriction to $\CP^{n-1}$ of a real linear map from the real vector space $H_n(\C)$ to $\C$.    

In order to present a continuous selection of $f_A^{-1}$ on $V \subseteq F(A)$, it is sufficient to find a subset $U \subset \C S^n$ such that $f_A$ is a bijection from $U$ to $V$.  It follows immediately from the compactness of $\C S^n$ that $f_A^{-1}$ is a continuous function from $V$ to $U$, so by restricting the range to $V$, we obtain a continuous selection. We may also find a subset $W \subset \C P^{n-1}$ on which $\hat{f}_A$ is a bijection onto $V$, in order to arrive at a continuous selection of $\hat{f}_A^{-1}$ on $V$. Our approach to finding sets on which $f_A$ (or $\hat{f}_A$) is a bijection is to parametrize a subset $U \subset \C S^n$ ($W \subset \C P^{n-1}$) and show that this parametrization composed with $f_A$ (respectively, $\hat{f}_A$) is a parametrization of the corresponding $V \subset F(A)$.   

For any matrix $A \in M_n(\C)$, recall the real and imaginary parts of $A$, $\re(A) = (A+A^*)/2$ and $\im(A) = (A-A^*)/2i$. Given $A \in M_n(\C)$, note that $F(e^{-i\theta}A) = e^{-i\theta}F(A)$ for any $\theta \in [0,2\pi)$.  Furthermore, $e^{-i\theta}A = \re (e^{-i \theta} A) + i \im (e^{-i \theta} A)$.  The left and right-most points on the rotated numerical range $F(e^{-i\theta} A)$ correspond to the maximal and minimal eigenvalues of the matrix $\re (e^{-i \theta} A)$. The map $\theta \mapsto \re (e^{-i \theta}A)$ is an analytic self-adjoint matrix valued function.  By a theorem of Rellich \cite[Corollary 2, section 3.5.5]{Baumgartel}, there is a family of $n$ functions $x_1(\theta), \ldots, x_n(\theta)$ that are analytic in $\theta$ on $[0,2\pi]$ and form an orthonormal basis of eigenvectors for $\re(e^{-i\theta}A)$.  For each $j \in \{1,\ldots,n\}$ let $\lambda_j(\theta)$ denote the eigenvalue of $\re(e^{-i\theta}A)$ corresponding to $x_j(\theta)$.   Of course, the eigenvalue-functions $\lambda_j(\theta)$ are also analytic in $\theta$ since $\lambda_j = x_j^*\re(e^{-i \theta}A)x_j$.  

For each eigenvector-function $x_j(\theta)$ there is an associated \emph{critical curve}, defined by $z_j(\theta) = f_A(x_j(\theta))$.  The images of these critical curves are contained in the numerical range $F(A)$. Furthermore, $F(A)$ is precisely the convex hull of the critical curves.  Using the fact that $\frac{d}{d\theta} \re(e^{-i \theta} A) = i \im(e^{-i \theta}A)$, we can derive the following alternative formula for the critical curves,  
\begin{equation} \label{eq:critcurves}
z_j(\theta) = e^{i \theta}(\lambda_j(\theta) + i \lambda_j'(\theta)).  
\end{equation}
The relationship between a numerical range and its critical curves was first described by Kippenhahn \cite{Ki}. See also \cite{JAG98} and \cite[Section 5]{GaSe12} for more details.   

From \eqref{eq:critcurves}, it is clear that the eigenvalue-functions of $\re(e^{-i \theta}A)$ determine the shape of the numerical range.  Since the eigenvalue-functions $\lambda_j$ are analytic, any two of them may only coincide at finitely many values of $\theta$ unless they are identical.  Thus, for all but finitely many angles $\theta \in [0,2\pi)$, $\re (e^{-i\theta} A)$ has $m \le n$ distinct eigenvalues. We will call $\theta$ an \emph{exceptional argument} if two or more distinct eigenvalue-functions coincide at $\theta$. The corresponding points $z_j(\theta)$ given by \eqref{eq:critcurves} are \emph{exceptional points}.  

At an exceptional argument $\theta_0$, there will be at least two distinct eigenvalue-functions achieving the same value.  Since both eigenvalue-functions are analytic, both functions have Taylor series expansions about $\theta_0$, which must differ.  We say the the eigenvalue-functions \emph{split at degree $k$} if the first coefficient where the Taylor series differ is in the degree $k$ term.  Note that flat portions of the boundary of $F(A)$ occur at exceptional arguments where the maximal eigenvalue-functions split at degree one.  This follows immediately from \eqref{eq:critcurves} and the fact that $F(A)$ is the convex hull of the critical curves.  A corner point of $F(A)$ is a point contained at the intersection of two flat portions.  A boundary point that is not a corner point and is not contained in the relative interior of a flat portion will be called a \emph{round point}.  If a round point is not an endpoint of a flat portion, then it is \emph{fully round}.  

The following theorem makes the relationship between the eigenvalue-functions of $\re(e^{-i\theta}A)$ and continuity failures of $f_A^{-1}$ clear.  

\begin{theorem}[Theorem 2.1, \cite{LLS2}] \label{thm:characterization}
Let $A \in M_n(\C)$ and $z = z_j(\theta_0) \in \partial F(A)$.
\begin{enumerate}
\item $f_A^{-1}$ is strongly continuous at $z$ if and only if $z$ is in the relative interior of a flat portion of the boundary or the eigenvalue-functions corresponding to $z$ at $\theta_0$ do not split.  
\item $f_A^{-1}$ is weakly continuous at $z$ if and only if $z$ is in a flat portion of the boundary or the eigenvalue-functions corresponding to $z$ at $\theta_0$ do not split at odd powers. {So weak continuity fails if and only if $z$ is a fully round boundary point and the eigenvalue-functions corresponding to $z$ at $\theta_0$ split at an odd power.}
\end{enumerate}
\end{theorem}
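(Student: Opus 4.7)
The plan is to analyze $f_A$ locally at each pre-image in $f_A^{-1}(z)$ by Taylor expanding the Rellich eigenvector curves near $\theta_0$ and constructing local inverses on either side. After replacing $A$ with $e^{-i\theta_0}A$ and subtracting a real constant, I may assume $\theta_0=0$ and $\max_i\lambda_i(0)=0$, so that $x\in f_A^{-1}(z)$ iff $x$ lies in the maximum eigenspace $E$ of $\re A$ and satisfies $x^*\im(A)x=\im z$. Let $x_1(\theta),\dots,x_r(\theta)$ denote the analytic eigenvector curves of $\re(e^{-i\theta}A)$ whose eigenvalues coincide with $\lambda_j(0)$ at $\theta=0$; then $E=\span\{x_1(0),\dots,x_r(0)\}$ and every candidate pre-image is a unit vector in this space meeting the imaginary-part constraint.

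I would then split into cases by the boundary structure at $z$. If $z$ lies in the relative interior of a flat portion, the segment between $z_j(0)$ and $z_{j'}(0)$ is swept out by unit-modulus combinations $\alpha x_j(0)+\beta x_{j'}(0)$, producing a continuous selection that covers every element of $f_A^{-1}(z)$ and so proves strong continuity. If no splitting occurs at $\theta_0$, then $E$ is one-dimensional, $f_A^{-1}(z)$ is a single projective point $[x_j(0)]$, and the analytic curve $x_j(\theta)$ provides boundary pre-images converging to it; combining this with the interior strong continuity established in \cite{CJKLS2} and a local surjectivity argument near $x_j(0)$ yields strong continuity at $z$.

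The main case is splitting at order $k\ge 2$, where $z$ is fully round and $E$ has dimension at least two. Writing $\lambda_j(\theta)-\lambda_{j'}(\theta)=c\theta^k+O(\theta^{k+1})$ with $c\ne 0$, the boundary of $F(A)$ near $z$ is traced by whichever of $z_j,z_{j'}$ carries the larger eigenvalue, and for non-exceptional angles the boundary pre-image is forced up to phase. When $k$ is even, the sign of $c\theta^k$ is constant on both sides of $0$, so a single critical curve traces the boundary throughout a punctured neighborhood; forced pre-images converge to $x_j(0)$, and I would build a local parametrization $\tilde x(\theta,s,\phi)=(1-s^2)^{1/2}x_j(\theta)+s e^{i\phi}x_i$ using another eigenvector $x_i$ of $\re A$ to show $f_A$ surjects onto a neighborhood of $z$ in $F(A)$ from a neighborhood of $x_j(0)$. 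This yields weak continuity via the limit $x=x_j(0)$, while strong continuity fails because $x_{j'}(0)$ and generic unit vectors in $E$ are never approached. When $k$ is odd, the dominant index switches at $\theta_0$, and I would build an explicit alternating boundary sequence $z_\ell\to z$ whose forced pre-images alternate between neighborhoods of $x_j(0)$ and $x_{j'}(0)$; since these projective limits are distinct, no choice of $x\in f_A^{-1}(z)$ and phases can make the sequence converge, so weak continuity fails.

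The main obstacle is making the even-$k$ local surjectivity rigorous: I need to show $\tilde x\mapsto f_A(\tilde x)$ has full real rank at $(\theta,s,\phi)=(0,0,\phi_0)$ for an appropriate $\phi_0$, which reduces to an eigenvalue-gap estimate between $\lambda_j(0)$ and the next eigenvalue of $\re A$ outside the coalescing family. The odd-$k$ nonexistence argument is the other delicate point: I must rule out all routes to convergence, including interior perturbations around each projective limit point and all phase adjustments, to confirm that weak continuity genuinely fails rather than being salvaged by some cleverly chosen interior selection.
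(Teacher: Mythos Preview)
The paper does not contain a proof of this statement: Theorem~\ref{thm:characterization} is quoted verbatim from \cite{LLS2} (as Theorem~2.1 there) and is used in the present paper only as a black box to locate the finitely many exceptional boundary points. There is therefore no ``paper's own proof'' to compare your proposal against.

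That said, your sketch is broadly along the right lines for how such a characterization is established. The reduction to $\theta_0=0$, the identification of $f_A^{-1}(z)$ with unit vectors in the maximal eigenspace $E$ of $\re A$ satisfying an imaginary-part constraint, and the case split by splitting degree are all natural. The two obstacles you flag are real: for the even-$k$ case you must show that $f_A$ is open at \emph{some} pre-image (not all), and your parametrization $\tilde x(\theta,s,\phi)$ needs a nondegeneracy argument that genuinely uses the gap between the maximal eigenvalue and the rest of the spectrum of $\re A$; for the odd-$k$ case you must rule out weak continuity at \emph{every} $x\in f_A^{-1}(z)$, not just at $x_j(0)$ and $x_{j'}(0)$, so the alternating-sequence obstruction has to be upgraded to cover all unit vectors in $E$ meeting the imaginary-part constraint. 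Both points are handled in \cite{LLS2}, which is where you should look for the actual argument.
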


An immediate consequence of Theorem \ref{thm:characterization} is that strong (and therefore weak) continuity of $f_A^{-1}$ can only fail at exceptional points on the boundary of the numerical range. In particular, there are at most finitely many exceptional points where weak continuity can fail.

\section{Continuous Selections on the Boundary}
\begin{lemma} \label{lem:analyticArcs}
For any analytic curve $\Gamma \subseteq \partial F(A)$, there is an analytic path $x:[0,1] \rightarrow \C S^n$ such that $f_A(x(t))$ parametrizes $\Gamma$.  If $F(A)$ has nonempty interior and $\Gamma$ is the whole boundary of $F(A)$, then $x$ may be chosen to be periodic on $[0,1]$.  The intersection of $f_A^{-1}$ with the range of $x(t)$ is a continuous selection of $f_A^{-1}$ on $\Gamma$.  
\end{lemma}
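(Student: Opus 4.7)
The plan is to exploit the two-part structure of $\partial F(A)$: near any boundary point, an analytic subarc $\Gamma$ either lies on a single critical curve $z_j(\theta)$, or it lies inside a single flat portion corresponding to an exceptional angle $\theta_0$. I would construct the analytic lift $x(t)$ separately in each of these two cases, then describe how to concatenate into a periodic lift for the full-boundary statement.

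For a round arc, if $\Gamma = z_j([\theta_1,\theta_2])$ for a single analytic branch, Rellich's theorem already supplies an analytic eigenvector function $x_j(\theta)$ of $\re(e^{-i\theta}A)$, so the lift is just $x(t) = x_j(\theta_1 + t(\theta_2-\theta_1))$, and $f_A(x(t)) = z_j(\theta_1+t(\theta_2-\theta_1))$ is an analytic injective parametrization of $\Gamma$. For a flat portion at an exceptional angle $\theta_0$, let $E^*$ be the eigenspace of $\re(e^{-i\theta_0}A)$ for its maximum eigenvalue $\lambda^*$; since $\re(e^{-i\theta_0}A)|_{E^*}=\lambda^* I$, the entire flat portion is carried by the restricted Hermitian operator $T = \im(e^{-i\theta_0}A)|_{E^*}$, and consists exactly of $\{e^{i\theta_0}(\lambda^* + i\mu) : \mu \in [\mu_{\min},\mu_{\max}]\}$ where $\mu_{\min},\mu_{\max}$ are the extremal eigenvalues of $T$. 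Picking orthonormal eigenvectors $v_1,v_2 \in E^*$ of $T$ for these eigenvalues, I would set $x(t) = \cos(\pi t/2)\,v_1 + \sin(\pi t/2)\,v_2$; the orthogonality $v_1 \perp v_2$ kills the cross term so that $f_A(x(t))$ equals $e^{i\theta_0}(\lambda^* + i(\cos^2(\pi t/2)\mu_{\min} + \sin^2(\pi t/2)\mu_{\max}))$, an analytic, monotone, bijective parametrization of the flat portion.

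For the periodic lift on the whole boundary, I would glue alternating round-arc and flat-portion pieces around $[0,2\pi]$. The key fact that makes the gluing continuous is that at an exceptional angle $\theta_0$ the Rellich eigenvectors $x_i(\theta_0), x_j(\theta_0)$ that coalesce there are precisely eigenvectors of $T$ (standard analytic perturbation theory diagonalizes the first-order perturbation, which is exactly $T$ on $E^*$, at $\theta_0$). So when a round arc coming from $x_i$ terminates, the flat-portion construction can be started with $v_1 = x_i(\theta_0)$, and ended at $v_2 = x_j(\theta_0)$ matching the next round arc. Going around once returns to the starting vector up to a unit phase factor (the holonomy of the Rellich basis); I would absorb this by multiplying the path by $e^{i\psi(t)}$ for a suitable linear phase $\psi$, which changes neither analyticity nor $f_A \circ x$.

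The continuous-selection conclusion is then automatic: $f_A \circ x$ is a continuous bijection from the compact interval $[0,1]$ (or the circle, in the periodic case) onto the Hausdorff set $\Gamma$, hence a homeomorphism, and the intersection of $f_A^{-1}$ with the range of $x$ is its continuous inverse. I expect the main obstacle to be the flat-portion step, specifically verifying that the construction with $v_1, v_2$ injectively covers the \emph{entire} flat portion and simultaneously glues continuously to the adjacent round arcs; both rely on the same identification of the Rellich eigenvectors at $\theta_0$ with eigenvectors of $T$, so getting that perturbation-theoretic statement cleanly stated is the real work.
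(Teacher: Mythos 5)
Your decomposition into the round-arc and flat-portion subcases mirrors the paper's, as does your use of first-order perturbation theory (via $\frac{d}{d\theta}\,\re(e^{-i\theta}A) = \im(e^{-i\theta}A)$) to identify the flat segment with the spectrum of the compressed operator $T=\im(e^{-i\theta_0}A)$ on the degenerate eigenspace, and the final compactness argument producing the continuous selection. The one genuine divergence is the round-arc lift: you take a single Rellich eigenvector function $x_j(\theta)$ directly, whereas the paper works with the analytic spectral projection $P(\theta)$ onto that eigenvalue branch and invokes Sard's theorem to find a fixed vector $w$ with $P(\theta)w \neq 0$ throughout, setting $x(\theta) = P(\theta)w/||P(\theta)w||$. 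Both are valid. Your route is a bit more direct and avoids the Sard argument, but requires a separate fix for periodicity; the paper's route makes periodicity automatic, since $P(\theta)$ depends only on the matrix family and so is $2\pi$-periodic when $\lambda$ is. Your fix -- absorbing the Rellich holonomy into a linear phase $e^{i\psi(t)}$ -- is correct, since on a closed analytic boundary the maximal eigenvalue stays geometrically simple and its Rellich monodromy is a unit scalar, and the phase factor preserves both analyticity and $f_A\circ x$. One caution: the periodicity clause of Lemma \ref{lem:analyticArcs} only concerns the case where $\Gamma = \partial F(A)$ is itself a single analytic curve, which rules out flat portions entirely; the alternating round/flat gluing you describe (with $v_1 = x_i(\theta_0)$, $v_2 = x_j(\theta_0)$ identified as eigenvectors of $T$ by degenerate perturbation theory) is a correct observation, but it really belongs to the proof of Lemma \ref{lem:bdrySelect}, not here, and a concatenated path would only be continuous, not analytic, at the junctions. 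For Lemma \ref{lem:analyticArcs} as stated, the gluing machinery is unnecessary.
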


\begin{proof}
The analytic curve $\Gamma$ is either contained in one of the critical curves of $F(A)$, or it is contained in a flat portion of the boundary. In the former case, there is one eigenvalue-function $\lambda(\theta)$  corresponding to the maximal eigenvalue of $\re( e^{-i \theta} A) = \cos(\theta) H + \sin(\theta) K$ such that $\Gamma$ is parametrized by \eqref{eq:critcurves} for $ \theta$ in some closed interval $I \subseteq [0, 2 \pi]$.  Let $P(\theta)$ denote the corresponding spectral projection, which is also an analytic function of $\theta$ (\cite[Theorem II.6.1]{Kato}). 
The map $\varphi:(v,\theta) \mapsto v - P(\theta)v$ has differentials with real rank at most $2n-1$, so by Sard's theorem \cite{sard1942} the range of $\varphi$ must have measure zero in $\C^n$. Choose a $w\in \C^n$ that is not in the range of $\varphi$.  Note that $P(\theta)w \neq 0$ for all $\theta \in I$, otherwise $w - P(\theta)w = w$ which would contradict our assumption that $w$ is not in the range of $\varphi$. Now let $x(\theta) = P(\theta)w/||P(\theta)w||$. By construction, $x(\theta)$ is a unit eigenvector of $\re(e^{-i\theta}A)$ corresponding to the maximal eigenvalue $\lambda(\theta)$, and therefore $f_A(x(\theta))$ parametrizes $\Gamma$.  We can make a simple affine linear change of variables to replace $x(\theta)$ defined on $I$ with $x(t)$ defined on $[0,1]$. If $\Gamma$ is a closed loop, then the corresponding spectral projection $P(\theta)$ is periodic on $[0,2\pi]$. It follows that our construction of $x(t)$ is periodic.  

If $\Gamma$ is a subset of a flat portion of the boundary, then let $x, y \in \C S^n$ be preimages under $f_A$ of the two endpoints of the flat portion. There is an angle $\theta \in [0,2\pi)$ such that $x,y$ are both eigenvectors of $\re(e^{-i\theta}A)$ corresponding to the maximal eigenvalue.  If $A_2$ denotes the compression of $A$ corresponding to the 2-by-2 subspace $\span \{x,y \}$, then $x,y$ are the eigenvectors of $\im (e^{-i\theta}A_2)$ corresponding to the maximal and minimal eigenvalues.  This implies that $x$ and $y$ are orthogonal, and direct computation shows that the flat portion can be parametrized by $f_A(x \cos \omega + y \sin \omega)$ for $\omega \in [0, \pi/2]$.  If we denote $x(\omega) = x \cos \omega + y \sin \omega$, we can make a simple change of variables so that $x(t)$ has domain $[0,1]$.  
\end{proof}

If $f_A^{-1}$ has weak continuity failures on $\partial F(A)$, then it will not be possible to choose a continuous selection on the whole boundary.  If there are no weak continuity failures on the boundary, then it is possible to find a continuous selection as the following lemma proves.  

\begin{lemma} \label{lem:bdrySelect}
Suppose that $f_A^{-1}$ is weakly continuous on all of $F(A)$. Then there is a continuous selection of $f_A^{-1}$ on $\partial F(A)$.   
\end{lemma}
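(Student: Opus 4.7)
The plan is to cover $\partial F(A)$ by finitely many closed analytic arcs on which Lemma~\ref{lem:analyticArcs} provides continuous selections, and then glue these selections at their common endpoints. The weak-continuity hypothesis, combined with Theorem~\ref{thm:characterization}, removes the only essential obstruction, namely odd-order splits at fully round boundary points.

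I would first dispose of the degenerate cases: if $F(A)$ is a single point, a constant selection works, and if $F(A)$ is a line segment, then $\partial F(A)$ is itself a flat portion and the second construction in Lemma~\ref{lem:analyticArcs} already yields a selection. In the main case, $F(A)$ has non-empty interior and $\partial F(A)$ is a topological circle. I would mark the finitely many exceptional boundary points: corner points, endpoints of flat portions, and fully round boundary points at which two eigenvalue-functions of $\re(e^{-i\theta}A)$ coincide (which, by hypothesis, must either not split or split at even order). These marked points subdivide $\partial F(A)$ into finitely many closed arcs, each of which is either a sub-arc of a single critical curve or an entire flat portion, and hence admits a continuous selection by Lemma~\ref{lem:analyticArcs}.

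To make the selections match at common endpoints, I would coordinate choices via a globally analytic Rellich basis $x_1(\theta),\ldots,x_n(\theta)$ of orthonormal eigenvectors of $\re(e^{-i\theta}A)$ on $[0,2\pi]$, with eigenvalues $\lambda_j(\theta)$. On a critical-curve arc where $\lambda_j$ is strictly maximal in the interior, use the selection $\theta\mapsto x_j(\theta)$; on a flat portion at exceptional angle $\theta_0$, use $\omega\mapsto u\cos\omega+v\sin\omega$, where $u,v$ are chosen unit maximal eigenvectors of $\re(e^{-i\theta_0}A)$ at the two endpoints. Three matching situations arise: (i) a flat portion meets an adjacent critical curve at $\theta_0$ with Rellich index $j$, which matches automatically once the corresponding endpoint vector of the flat portion is taken to be $x_j(\theta_0)$; (ii) two critical-curve arcs meet at an exceptional round point, which matches automatically because by Theorem~\ref{thm:characterization} the same Rellich index $j$ remains maximal on both sides and $x_j(\theta)$ is analytic across $\theta_0$; (iii) two flat portions meet at a corner point, which requires matching in $\C S^n$ rather than merely in $\CP^{n-1}$.

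The main obstacle is case (iii). Let $\theta_a<\theta_b$ be the two exceptional angles whose flat portions meet at the corner $v$. Because the maximum eigenvalue of $\re(e^{-i\theta}A)$ must be simple throughout the open interval $(\theta_a,\theta_b)$ — otherwise an extra flat portion would break the corner — the projective pre-image of $v$ under $f_A$ is a single point of $\CP^{n-1}$. If the incoming selection at $v$ from the first flat portion is the unit vector $u$, then $u$ lies in the maximal eigenspace of $\re(e^{-i\theta_b}A)$ and differs from the Rellich vector naturally assigned at $v$ by the second flat portion only by a unit phase. Substituting $u$ for that Rellich vector in the parametrization of the outgoing flat portion still produces a valid parametrization, because orthogonality to the other endpoint vector is preserved under unit rescaling and the compressed $2\times 2$ eigenstructure used in the proof of Lemma~\ref{lem:analyticArcs} is unchanged. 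Propagating this phase adjustment across the finitely many corners around $\partial F(A)$ yields a continuous selection on the whole boundary.
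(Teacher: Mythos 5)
Your overall strategy — decompose $\partial F(A)$ into critical-curve arcs and flat portions, select on each piece via Lemma~\ref{lem:analyticArcs}, and glue at the common endpoints — is the same as the paper's, and the key observation that weak continuity rules out odd-degree ($>1$) splits, so the boundary alternates between analytic arcs and flat portions, is also correct. However, your treatment of case (iii) contains a real error and, more to the point, misidentifies where the actual work is.

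The claim that ``the maximum eigenvalue of $\re(e^{-i\theta}A)$ must be simple throughout the open interval $(\theta_a,\theta_b)$, otherwise an extra flat portion would break the corner'' is false, and so is the consequence you draw from it that the projective pre-image of the corner $v$ is a single point of $\CP^{n-1}$. A corner point is a reducing (normal) eigenvalue of $A$, and its eigenspace may have dimension $\ge 2$; in that case the maximal eigenvalue of $\re(e^{-i\theta}A)$ is degenerate for \emph{every} $\theta$ in $(\theta_a,\theta_b)$, the coinciding Rellich branches are identical (so no flat portion is produced), and $\hat f_A^{-1}(v)$ is a whole $\CP^{k-1}$ rather than a point. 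Try $A=\mathrm{diag}(1,1,i,-1)$: the vertex $1$ of the triangle has a two-dimensional eigenspace. Because of this your ``unit-phase'' argument for matching at a corner does not go through as written. The conclusion is nevertheless salvageable, but for a different reason: \emph{every} pre-image of a corner point is a reducing eigenvector of $A$, hence simultaneously a maximal eigenvector of $\re(e^{-i\theta}A)$ for all $\theta\in[\theta_a,\theta_b]$, so whichever unit vector $u$ arrives from the first flat portion is automatically a legitimate choice of endpoint for the parametrization of the second flat portion, and the corresponding orthogonal partner exists in the maximal eigenspace at $\theta_b$. No ``phase propagation'' around the boundary is needed: the paper simply treats a corner as a degenerate (single-point) analytic arc, and lets the flat portions absorb all endpoint matching, since the parametrization $x\cos\omega+y\sin\omega$ of a flat portion accepts \emph{any} choice of orthogonal unit pre-images $x,y$ of its endpoints. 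The same flexibility also quietly handles the global closing-up around $\partial F(A)$, a point worth making explicit since a raw Rellich eigenvector $x_j(\theta)$ need not be $2\pi$-periodic even when its spectral projection is; the paper's $P(\theta)w/\|P(\theta)w\|$ device in Lemma~\ref{lem:analyticArcs} is designed precisely to produce a periodic selection when the whole boundary is a single analytic loop.
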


\begin{proof}
In the case where $F(A)$ has no flat portions, the boundary is given by a single critical curve, and Lemma \ref{lem:analyticArcs} immediately implies that there is an analytic, periodic function $x:[0,1] \rightarrow \C S^n$ such that $f_A(x(t))$ parametrizes the boundary. The intersection of $f_A^{-1}$ with the range of $x$ is a continuous selection.  

If there are flat portions, then apply Lemma \ref{lem:analyticArcs} to chose a continuous selection of $f_A^{-1}$ on each curved analytic portion of the boundary. Since there are no weak continuity failures, the maximal eigenvalue-functions that define the boundary of $F(A)$ can only cross at degree one splitting points. Therefore the boundary of $F(A)$ is defined by alternating analytic curves and flat portions. If $F(A)$ has corner points, one or more of the analytic curves may be single points, but that is not a concern.   For a given flat portion, let $x, y \in \C S^n$ be the pre-images of the end points as determined by the continuous selections on the curved portions of $\partial F(A)$. Using Lemma \ref{lem:analyticArcs}, we choose an $x:[0,1] \rightarrow \C S^n$ such that $f_A(x(t))$ parametrizes the flat portion of the boundary.  From the proof of Lemma \ref{lem:analyticArcs}, it is clear that we may choose $x(t)$ such that $x(0) = x$ and $x(1) = y$. Then the map $f_A(x(t)) \mapsto x(t)$ continuously extends our selection of $f_A^{-1}$ to include the flat portion.   
\end{proof}

\color{black}

\color{black}
\section{Constructing the Selection}

In order to construct a continuous selection of $f_A^{-1}$ on the interior of $F(A)$, it will be convenient to have an alternative formula for a selection of $f_A^{-1}$ on $\partial F(A)$. Let $A \in M_n(\C)$ have a numerical range $F(A)$ with non-empty interior.  Suppose that the critical curves corresponding to the maximal eigenvalues of $\re(e^{-i\theta}A)$ cross at the exceptional arguments $\theta_1 < \theta_2 < \ldots < \theta_m$ in $[0,2\pi)$.  By rotation, we may assume without loss of generality that $\theta_1 > 0$.  Fix $x_0 \in f_A^{-1}(z_0)$ where $z_0$ is the point in $F(A)$ with maximal real part.  

On each interval $(\theta_k,\theta_{k+1})$, the spectral projection $P(\theta)$ corresponding to the maximal eigenvalue of $\re (e^{-i\theta} A)$ is an analytic function of $\theta$. The projection valued function $P(\theta)$ on $(\theta_k,\theta_{k+1})$ extends to an analytic function on $\R$ \cite[Theorem II.6.1]{Kato}, which we will denote by $P_k(\theta)$. For all values of $\theta$, $P_k(\theta)$ is a projection into an eigenspace of $\re(e^{-i\theta}A)$, however the corresponding eigenvalue may not be maximal outside $(\theta_k,\theta_{k+1})$. 

The expression $P_k(\theta)x_0$ is analytic, as is $||P_k(\theta)x_0||$.  If $||P_k(\theta)x_0||$ is not identically zero on $[\theta_k,\theta_{k+1}]$, then it must be nonzero for all but finitely many $\theta$ in the interval.  In this case, we claim that there is a piecewise function $\alpha:[\theta_k,\theta_{k+1}] \rightarrow \{-1,1\}$ such that 
\begin{equation} \label{eq:p1}
x_k(\theta) = \alpha(\theta) \frac{P_k(\theta)x_0}{||P_k(\theta)x_0||}
\end{equation} 
has only removable discontinuities, and so we may extend $x_k$ to a continuous function on $[\theta_k,\theta_{k+1}]$. The following lemma proves this claim.  
\begin{lemma} \label{lem:continuize}
Let $I$ be an interval in $\R$, and suppose that $x:I \rightarrow \C^n$ is analytic.  There is a continuous function $y:I \rightarrow \C^n$ such that $y(t) = \pm x(t)/||x(t)||$ for all $t \in I$ where $x(t) \neq 0$.  
\end{lemma}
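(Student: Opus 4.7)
The plan is to deal with the zeros of $x$ by a local analytic factorization and then propagate consistent signs between the regions where $x$ is nonvanishing. First I would dispose of the trivial case: if $x \equiv 0$ on $I$, set $y \equiv 0$ and the conclusion is vacuous. Otherwise, some coordinate function $x_j : I \to \C$ is a nonzero analytic function, so the zero set $Z := \{t \in I : x(t) = 0\}$ is contained in the (discrete) zero set of $x_j$ and is therefore a discrete subset of $I$.

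Next I would establish the local normal form at each $t_0 \in Z$. Let $k \geq 1$ be the minimum, over all coordinates $x_j$ that are not identically zero near $t_0$, of the order of vanishing of $x_j$ at $t_0$. Then $v(t) := (t-t_0)^{-k} x(t)$ extends to an analytic function on a neighborhood of $t_0$ with $v(t_0) \neq 0$, and for $t$ near $t_0$ with $t \neq t_0$,
\begin{equation*}
\frac{x(t)}{\|x(t)\|} \;=\; \frac{(t-t_0)^k}{|t-t_0|^k}\cdot\frac{v(t)}{\|v(t)\|} \;=\; \bigl(\mathrm{sgn}(t-t_0)\bigr)^k\,\frac{v(t)}{\|v(t)\|}.
\end{equation*}
Since $v/\|v\|$ is continuous at $t_0$, if $k$ is even then $x/\|x\|$ already extends continuously across $t_0$, while if $k$ is odd then the two one-sided limits differ only by a global sign. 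In the odd case, flipping one side by $-1$ (which is permitted by the ``$\pm$'' in the statement) produces a continuous extension with value $v(t_0)/\|v(t_0)\|$ at $t_0$.

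Finally I would glue these local extensions together. The complement $I \setminus Z$ is a disjoint union of open subintervals $J_\alpha$, on each of which $x/\|x\|$ is analytic, and the $J_\alpha$ are linearly ordered along $I$ with consecutive intervals separated by a single point of $Z$. Pick any $J_{\alpha_0}$ and set $\epsilon_{\alpha_0} = 1$; then propagate outward through the chain by the rule $\epsilon_\beta = (-1)^{k} \epsilon_\alpha$ whenever $J_\alpha$ and $J_\beta$ are adjacent and separated by a zero of local order $k$. Define $y(t) = \epsilon_\alpha\, x(t)/\|x(t)\|$ on each $J_\alpha$, and at each $t_0 \in Z$ adjacent to $J_\alpha$ set $y(t_0) = \epsilon_\alpha\, v(t_0)/\|v(t_0)\|$. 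The local computation above shows that both one-sided limits at $t_0$ agree with this value, so $y$ is continuous on $I$.

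The hard part here is really just the bookkeeping of signs across possibly infinitely many isolated zeros. Discreteness of $Z$ inside the interval $I$ makes the intervals $J_\alpha$ totally ordered, so the sign-propagation rule is unambiguous and never produces a conflict; the analytic factorization at each zero does all the substantive analytic work in guaranteeing continuity.
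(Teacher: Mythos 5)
Your proof is correct and follows essentially the same approach as the paper's: expand $x$ to leading order at each zero, observe that the one-sided limits of $x/\|x\|$ differ by the factor $(-1)^k$ where $k$ is the vanishing order, and propagate signs sequentially across the intervals between zeros. The only (minor) difference is that you explicitly handle the $x\equiv 0$ case and allow a discrete, possibly infinite zero set, whereas the paper tacitly assumes finitely many zeros $t_1,\ldots,t_m$; the substance of the argument is identical.
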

\begin{proof}
Since $x(t)$ is analytic, so is $x(t)/||x(t)||$, except at points where $x(t) =0$. Let $t_1, \ldots, t_m$ denote these zeros.  Near each zero $t_j$, $x(t)$ has a Taylor series expansion:
$$x(t) = a_1 (t-t_j) + a_2 (t-t_j)^2 + \ldots.$$
Let $k_j$ denote the degree of the first nonzero vector coefficient in the series above.  Note that 
$$\lim_{t \rightarrow t_j^-} \frac{x(t)}{||x(t)||} = \frac{a_{k_j}}{||a_{k_j}||}(-1)^{k_j},$$
and
$$\lim_{t \rightarrow t_j^+} \frac{x(t)}{||x(t)||} = \frac{a_{k_j}}{||a_{k_j}||}.$$
Let $y(t) = c_j \frac{x(t)}{||x(t)||}$ on each open interval between adjacent zeros $t_j$ and $t_{j+1}$, where each $c_j$ is either 1 or -1.  By choosing the $c_j$ constants sequentially, we can ensure that the discontinuities in $y(t)$ at each $t_j$ are removable, and therefore $y(t)$ can be extended to a continuous function on $I$.
\end{proof}

\color{black}
If $P_k(\theta)x_0 = 0$ identically on $[\theta_k,\theta_{k+1}]$, then we choose $w$ as in the proof of Lemma \ref{lem:analyticArcs} such that $P_k(\theta)w \neq 0$ for all $\theta \in [\theta_k,\theta_{k+1}]$. In this case we let 
\begin{equation} \label{eq:p2}
x_k(\theta) = \frac{P_k(\theta)w}{||P_k(\theta)w||}
\end{equation}  
In both \eqref{eq:p1} and \eqref{eq:p2}, we note that  $x_0^*x_k(\theta) \in \R$ for all $\theta \in [\theta_k,\theta_{k+1}]$.  

If the maximal eigenvalue-function $\lambda(\theta)$ of $\re(e^{-i\theta}A)$ splits at even degree at $\theta_k$, then the spectral projections $P_k(\theta)$ and $P_{k-1}(\theta)$ are identical as are the functions $x_k(\theta)$ and $x_{k-1}(\theta)$ (see the proof of Theorem 2.1 in \cite{LLS2}). If $\lambda(\theta)$ splits at degree one then there is a flat portion of the boundary corresponding to the argument $\theta_k$.  On the flat portion, we define the function
\begin{equation} \label{eq:p3}
y_k(\omega) = \cos (\omega) x_{k-1}(\theta_{k}) + \sin (\omega) x_{k}(\theta_k),
\end{equation}  
and we note that $f_A$ is a bijection from the curve $\{ y_k(\omega): \omega \in [0,\pi/2] \}$ in $\C S^n$ onto the flat portion of the boundary corresponding to the argument $\theta_k$.  By traversing the curves $x_k$ and $y_k$ in order and parametrizing the resulting curve with domain $[0,1]$, we obtain a path $y(t)$ in $\C S^n$ such that the image of $f_A(y(t))$ is the boundary of $F(A)$, $x_0^*y(t) \in \R$ for all $t \in [0,1]$, and $y(t)$ is continuous except at values of $t$ where $y(t)$ corresponds to an exceptional argument $\theta_k$ where $\lambda(\theta)$ splits at odd degree greater than 1.  For these $y(t)$, $f_A(y(t))$ is a point on $\partial F(A)$ where weak continuity of $f_A^{-1}$ fails by Theorem \ref{thm:characterization}.  If there are no weak continuity failures on $\partial F(A)$, then $y(t)$ is continuous on $[0,1]$, although $x_0=y(0)$ and $y(1)$ may differ by a constant. We will refer to functions $y(t)$ constructed in this manner  as \textit{canonical selections of $\partial F(A)$}.

\color{black}
\begin{lemma} \label{lem:2by2}
Let $A$ be a non-normal 2-by-2 matrix with complex entries, and suppose that $x, y \in \C S^2$, $x^*y \neq 0$, and $f_A(x)$ an $f_A(y)$ are distinct points in $\partial F(A)$. Let 
\begin{equation} \label{eq:lineselect1}
h(\lambda) = \frac{ \lambda x + (1-\lambda) \left(y^*x + i \beta \sqrt{1-|x^*y|^2} \right) y + \frac{\sqrt{2}}{2}C v}{\sqrt{C+1}}, 
\end{equation}
where $C = 2 \sqrt{\lambda(1-\lambda)(1-|x^*y|)}$, $\beta = \pm \frac{y^*x}{|x^*y|}$, and $v = \frac{\sqrt{2}}{2} \left( x + i \beta  \frac{y-(x^*y)x}{||y-(x^*y)x||} \right)$.  Then 
$$
f_A(h(\lambda)) = \lambda f_A(x) + (1-\lambda)f_A(y).
$$
\end{lemma}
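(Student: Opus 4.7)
The plan is to verify the identity $f_A(h(\lambda)) = \lambda f_A(x) + (1-\lambda)f_A(y)$ by a direct bilinear expansion in a convenient orthonormal basis of $\C^2$. First, set $a = x^*y$ and $s = \sqrt{1-|a|^2}$, and introduce the orthonormal pair $\{x,e\}$ with $e = (y-ax)/s$, so that $y = ax + se$ and $v = (x + i\beta e)/\sqrt{2}$ is manifestly a unit vector. A short computation gives $x^*v = 1/\sqrt{2}$ and $y^*v = \gamma/\sqrt{2}$, where $\gamma = \bar a + i\beta s$, and the defining relation $\beta = \pm\bar a/|a|$ yields both $|\gamma|=1$ and $\beta^2 = \bar a/a$.

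Next, write the numerator of $h(\lambda)$ as $N(\lambda) = \lambda x + (1-\lambda)\gamma y + (C/\sqrt{2})\,v$ and expand $N(\lambda)^*A\,N(\lambda)$ bilinearly into its nine pairwise terms. Because $C^2 = 4\lambda(1-\lambda)(1-|a|)$ is polynomial in $\lambda$ while $C$ itself contains a square root, the target identity $N^*AN = (C+1)[\lambda f_A(x)+(1-\lambda)f_A(y)]$ splits into a polynomial-in-$\lambda$ part and a part linear in $C$. Matching the two parts separately reduces the lemma to the following three sub-identities:
\begin{align*}
\gamma\,x^*Ay + \bar\gamma\,y^*Ax + 2(1-|a|)f_A(v) &= f_A(x) + f_A(y),\\
\tfrac{1}{\sqrt{2}}\bigl(x^*Av + v^*Ax\bigr) &= f_A(x),\\
\tfrac{1}{\sqrt{2}}\bigl(\bar\gamma\,y^*Av + \gamma\,v^*Ay\bigr) &= f_A(y).
\end{align*}

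I would then verify each sub-identity by writing $A$ as a matrix $(a_{ij})$ in the $\{x,e\}$ basis and substituting the explicit formulas for $y$, $v$, and $\gamma$. The boundary hypothesis enters decisively here: since $f_A(x)\in\partial F(A)$, the framework recalled in Section~2 supplies an angle $\theta_x$ for which $x$ is a maximal eigenvector of $\re(e^{-i\theta_x}A)$, which forces the phase relation $a_{21} = -e^{2i\theta_x}\overline{a_{12}}$; an analogous constraint holds for $y$ with some angle $\theta_y$. Combined with $\beta^2 = \bar a/a$, these relations are exactly what makes the imaginary off-diagonal contributions cancel in each of the three sub-identities.

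The main obstacle I expect is the careful bookkeeping through the nine bilinear cross terms and the two independent phase constraints coming from the boundary conditions. The choice $\beta=\pm\bar a/|a|$ is precisely what renders the auxiliary vector $v$ ``phase-compatible'' with the boundary conditions, so the required cancellations occur. Without the hypothesis that both $f_A(x)$ and $f_A(y)$ lie on $\partial F(A)$, the phase relations break down and the sub-identities fail, consistent with the fact that the formula is designed specifically to interpolate preimages of chords whose endpoints are boundary points.
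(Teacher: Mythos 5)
Your proposal takes a genuinely different route from the paper. The paper's proof is geometric: it uses Davis's observation that $\CP^1$ sits inside $\herm{2}$ as a 2-sphere of radius $\tfrac{\sqrt{2}}{2}$, introduces the Pauli-type basis $X_1,X_2,X_3$ and the map $\psi$ to pass to Bloch coordinates in $\R^3$, and shows that $\hat f_A$ becomes an affine linear map whose one-dimensional kernel is spanned by $\psi(vv^*)$; the formula for $h(\lambda)$ then falls out of moving from $\lambda\psi(xx^*)+(1-\lambda)\psi(yy^*)$ along the kernel direction back onto the sphere and pulling back via $\psi^{-1}$. That argument \emph{derives} $h$ and makes the role of $v$ and the boundary hypothesis transparent (the boundary condition is exactly what places $\psi(xx^*)$, $\psi(yy^*)$ on the great circle perpendicular to $\ker B$). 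Your approach instead \emph{verifies} the stated $h$ by expanding $N^*AN$ in the $\{x,e\}$ basis and separating the radical $C$ from the polynomial part; your reduction to the three sub-identities is a correct and legitimate splitting (justified because $C=c\sqrt{\lambda(1-\lambda)}$ is not a rational function of $\lambda$, so the $C$-free and $C$-linear parts must each vanish). Your identification of $\gamma = \bar a + i\beta s$, $|\gamma|=1$, and $\beta^2=\bar a/a$ is also correct.

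Two caveats, though. First, the sub-identities are stated but not actually proved: the claim that the boundary phase relations $a_{21}=-e^{2i\theta_x}\overline{a_{12}}$ (and its analogue for $y$, which needs to be transported into the $\{x,e\}$ basis) together with $\beta^2=\bar a/a$ force all the required cancellations is plausible and in fact works, but it is exactly the nontrivial bookkeeping you flag as an ``expected obstacle,'' so as written this is an outline rather than a complete proof. Second, the lemma as printed contains a typo: $C$ should be $2\sqrt{\lambda(1-\lambda)(1-|x^*y|^2)}$, consistent with the paper's own derivation ($C^2 = 1-\|r\|^2 = 4\lambda(1-\lambda)(1-|x^*y|^2)$) and with the formula \eqref{eq:mainselect} used later. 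Your first sub-identity, which you wrote with $2(1-|a|)f_A(v)$, is only correct with $2(1-|a|^2)f_A(v)$; carrying your computation through would have surfaced this, and a concrete check (e.g.\ $A=\bigl(\begin{smallmatrix}0&2\\0&0\end{smallmatrix}\bigr)$ with $x=\tfrac{1}{\sqrt2}(1,1)$, $y=\tfrac{1}{\sqrt2}(1,i)$) confirms $|a|^2$ is the right exponent.
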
 
\begin{proof}
By the well known Elliptical Range Theorem, $F(A)$ is a convex ellipse. An elegant proof of that theorem can be found in \cite{Davis71}. The main observation of \cite{Davis71} is that $\C P^1$ is a 2-sphere of radius $\frac{\sqrt{2}}{2}$ centered at $\frac{1}{2}I_2$ in the affine subspace of $\herm{2}$ consisting of matrices with trace one. Therefore $F(A)$, which is the image of $\C P^1$ under the linear transformation $\hat{f}_A$, must be a convex ellipse.  Since $A$ is not normal, the ellipse is not degenerate (see e.g., \cite{donoghue1957}).  

The following matrices form a basis for the set of trace zero matrices in $\herm{2}$.   
$$X_1 = \begin{bmatrix}
0 & 1 \\ 1 & 0 
\end{bmatrix}, 
X_2 = \begin{bmatrix}
0 & i \\ -i & 0
\end{bmatrix},
X_3 = \begin{bmatrix}
1 & 0 \\ 0 & -1 
\end{bmatrix}.
$$
Consider the linear map $\psi:\herm{2} \rightarrow \R^3$ defined by  
$$\psi(Y) = \begin{bmatrix}
\langle Y, X_1 \rangle \\
\langle Y, X_2 \rangle \\
\langle Y, X_3 \rangle 
\end{bmatrix}.$$
The image of $\C P^1$ under $\psi$ is precisely the unit sphere $S$ in $\R^3$. In fact, if we restrict the domain of $\psi$ to $\C P^1$, then $\psi:\C P^1 \rightarrow S$ is a bijection and for any $r = (r_1,r_2,r_3) \in S$, $\psi^{-1}(r) = \frac{1}{2}(r_1 X_1 + r_2 X_2 + r_3 X_3) + \frac{1}{2}I_2$.  Observe the following relations:
\begin{itemize}
\item For $x, y \in \C S^n$, $\langle xx^*, yy^* \rangle_{\herm{n}} = |x^*y|^2$.
\item For $xx^*, yy^* \in \C P^1$, $\langle xx^*, yy^* \rangle_{\herm{n}} = \frac{1}{2} \langle \psi(xx^*), \psi(yy^*) \rangle_{\R^3} + \frac{1}{2}$.  

\end{itemize}

Let $H = \re(A)$ and $K=\im(A)$ so that $\hat{f}_{A}(Y) = \tr(HY)+i\tr(KY)$. If we identify $\C$ with $\R^2$, then for any $u \in \C S^2$,
$$\hat{f}_{A}(uu^*) = B \psi(uu^*) + \tfrac{1}{2}\tr (A)$$
where 
$$B = \tfrac{1}{2} \begin{bmatrix}
\langle H,X_1 \rangle & \langle H,X_2 \rangle & \langle H,X_3 \rangle \\
\langle K,X_1 \rangle & \langle K,X_2 \rangle & \langle K,X_3 \rangle 
\end{bmatrix}.$$
Since $xx^*$ and $yy^*$ are both mapped to $\partial F(A)$ by $\hat{f}_A$, it follows that  $\psi(xx^*)$ and $\psi(yy^*)$ are both on the great circle of $S$ that is mapped to $\partial F(A)$ by the affine linear transformation $r \mapsto B r + \frac{1}{2} \tr(A)$. The two vectors in $S$ orthogonal to this great circle are in the nullspace of $B$.  Since $|x^*v| = \frac{\sqrt{2}}{2}$ and 
\begin{equation} \label{eq:ystarv}
y^*v = \tfrac{\sqrt{2}}{2}\left( y^*x + i \beta \frac{1 - |x^*y|^2}{||y-(x^*y)x||} \right) = \tfrac{\sqrt{2}}{2}\left( y^*x + i \beta \sqrt{1 - |x^*y|^2} \right),
\end{equation}
so that $|y^*v| = \frac{\sqrt{2}}{2}$, it follows that $\psi(vv^*)$ is orthogonal to both $\psi(xx^*)$ and $\psi(yy^*)$. Since $x^*y \neq 0$, $\psi(xx^*)$ and $\psi(yy^*)$ are not antipodal points on $S$. Therefore $\psi(vv^*)$ must be one of the two vectors in $S$ that are in the nullspace of $B$. 

We now construct a point $s \in S$ such that $Bs + \frac{1}{2} \tr A = \lambda f_A(x) + (1- \lambda) f_A(y)$.  The point $r = \lambda \psi(xx^*) + (1- \lambda) \psi(yy^*)$, has $Br + \frac{1}{2}\tr A =  \lambda f_A(x) + (1- \lambda) f_A(y)$ by construction, but $r \notin S$.  Note that 
$$||r||^2 = \lambda^2 + (1-\lambda)^2 + 2 \lambda(1- \lambda) \langle \psi (xx^*), \psi (yy^*) \rangle_{\R^3} = $$
$$=2 \lambda^2 - 2 \lambda + 1 + 2 \lambda (1- \lambda) (2|x^*y|^2 - 1). $$
Let $C^2 = 1 - ||r||^2$.  Then 
$$C^2 = 2\lambda - 2 \lambda^2 - (2\lambda - 2\lambda^2) (2 |x^*y|^2 - 1) = $$
$$ = 4 \lambda (1- \lambda) (1 - |x^*y|^2).$$
So $C = 2 \sqrt{\lambda(1- \lambda)(1 - |x^*y|^2)}$.  
Since $r \perp \psi(vv^*)$, it follows that $s = r + C \psi(vv^*) \in S$ has the desired properties.  By applying the map $\psi^{-1}$ to $s$, we see that there must be some $u \in \C S^2$ such that 
$$uu^* = \psi^{-1}(s) =  \lambda xx^* + (1-\lambda) yy^* + C (vv^* - \tfrac{1}{2}I_2) \in \C P^1,$$
and $f_A(u) = \hat{f}_A(uu^*) = \lambda f_A(x) + (1- \lambda)f_A(y)$.  Given $uu^*$ it is only possible to determine $u$ up to multiplication by a unimodular constant. However, it is convenient to set 
$$u = \frac{uu^*v}{||uu^*v||} = \frac{uu^*v}{|u^*v|} = \frac{uu^*v}{\sqrt{\langle uu^*, vv^* \rangle_{\herm{2}}}} = \frac{\sqrt{2} uu^*v}{\sqrt{C+1}} = $$
$$ = \frac{ \lambda x + \sqrt{2} (1-\lambda)(y^*v) y + \frac{\sqrt{2}}{2}C v}{\sqrt{C+1}}.$$
By \eqref{eq:ystarv}, $\sqrt{2} y^*v = y^*x + i \beta \sqrt{1 - |x^*y|^2}$. Letting $h(\lambda) = u$ gives
$$h(\lambda) = \frac{ \lambda x + (1-\lambda) \left(y^*x + i \beta \sqrt{1-|x^*y|^2} \right) y + \frac{\sqrt{2}}{2}C v}{\sqrt{C+1}}. $$
\end{proof}

\color{black}

For normal 2-by-2 matrices, the following result can be verified by direct computation.  
\begin{lemma} \label{lem:normal2by2}
Let $A \in M_2(\C)$ be normal with two distinct eigenvalues, and suppose that $x, y \in \C S^2$ are eigenvectors corresponding to the two eigenvalues of $A$. Let
\begin{equation} \label{eq:lineselect2}
h(\lambda) = \frac{ \lambda x + (1-\lambda) i y + \frac{\sqrt{2}}{2}C v}{\sqrt{C+1}}, 
\end{equation}
where $C = 2 \sqrt{\lambda(1-\lambda)}$ and $v = \frac{\sqrt{2}}{2} \left( x + i y \right)$.  Then 
$$
f_A(h(\lambda)) = \lambda f_A(x) + (1-\lambda)f_A(y).
$$
\end{lemma}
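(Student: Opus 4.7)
The plan is to verify the claim by direct computation, exploiting the fact that for a normal matrix with distinct eigenvalues the eigenvectors $x$ and $y$ are automatically orthonormal. This orthogonality is what makes the formulas collapse enough to avoid the $\beta$/$|x^*y|$ bookkeeping that was needed in Lemma~\ref{lem:2by2}.

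First I would rewrite $h(\lambda)$ in the orthonormal basis $\{x,y\}$. Since $x^*y=0$, one has $v=\tfrac{\sqrt 2}{2}(x+iy)$, so
\[
\tfrac{\sqrt 2}{2}Cv=\tfrac{C}{2}(x+iy),
\]
and therefore
\[
\sqrt{C+1}\,h(\lambda)=\bigl(\lambda+\tfrac{C}{2}\bigr)x+i\bigl((1-\lambda)+\tfrac{C}{2}\bigr)y.
\]
Thus $h(\lambda)=ax+by$ with $a=(\lambda+C/2)/\sqrt{C+1}$ and $b=i((1-\lambda)+C/2)/\sqrt{C+1}$.

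Next I would exploit the crucial algebraic identity coming from $C^2=4\lambda(1-\lambda)$:
\[
\bigl(\lambda+\tfrac{C}{2}\bigr)^{2}=\lambda^{2}+\lambda C+\lambda(1-\lambda)=\lambda(C+1),
\]
and by symmetry $\bigl((1-\lambda)+\tfrac{C}{2}\bigr)^{2}=(1-\lambda)(C+1)$. So $|a|^{2}=\lambda$ and $|b|^{2}=1-\lambda$; in particular $\|h(\lambda)\|=1$, confirming that $h(\lambda)\in\C S^{2}$.

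Finally, since $A$ is normal with $Ax=f_A(x)\,x$ and $Ay=f_A(y)\,y$, and since $\{x,y\}$ is orthonormal,
\[
f_A(h(\lambda))=h(\lambda)^{*}Ah(\lambda)=|a|^{2}f_A(x)+|b|^{2}f_A(y)=\lambda f_A(x)+(1-\lambda)f_A(y),
\]
which is the desired identity. The only real obstacle is spotting the clean factorization $(\lambda+C/2)^{2}=\lambda(C+1)$; once that is in hand the proof is a three-line verification, which is why the authors simply assert it follows by direct computation.
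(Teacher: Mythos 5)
Your proof is correct, and it supplies exactly the direct computation the paper alludes to but omits: expand $h(\lambda)$ in the orthonormal eigenbasis $\{x,y\}$, use $C^2=4\lambda(1-\lambda)$ to get the factorization $(\lambda+C/2)^2=\lambda(C+1)$ (and its symmetric counterpart), and read off $|a|^2=\lambda$, $|b|^2=1-\lambda$. Since the paper itself only says the lemma ``can be verified by direct computation,'' your argument is the same approach, carried out in full.
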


Note that \eqref{eq:lineselect2} is equivalent to \eqref{eq:lineselect1} with $\beta = 1$. In fact, any unimodular constant $\beta$ would have worked.

\begin{theorem} \label{thm:nocornermain}
Let $A \in M_n(\C)$ be a matrix such that $F(A)$ has no corner points and $f_A^{-1}$ has no weak continuity failures on $F(A)$. There is a continuous selection $g:F(A) \rightarrow \C S^n$ of $f_A^{-1}$.
\end{theorem}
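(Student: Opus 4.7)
My plan is to fix a fully round boundary point $z_0\in\partial F(A)$ (which exists because $F(A)$ has no corners) with a preimage $x_0\in f_A^{-1}(z_0)$, and then build $g$ by sweeping chords from $z_0$ across $F(A)$, using the canonical selection of $\partial F(A)$ as the ``angular'' coordinate and Lemma~\ref{lem:2by2} (or Lemma~\ref{lem:normal2by2} in the normal case) to parametrize each chord.

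Concretely, let $y:[0,1]\to\C S^n$ be a canonical selection of $\partial F(A)$ with $y(0)=x_0$; by hypothesis this is continuous, and by construction $x_0^*y(t)\in\R$ for all $t$ with $y(1)=\pm x_0$. For each $z\in F(A)\setminus\{z_0\}$, convexity of $F(A)$ yields unique continuous $t(z)\in[0,1)$ and $\lambda(z)\in[0,1)$ such that $z=\lambda(z)z_0+(1-\lambda(z))f_A(y(t(z)))$. I then apply Lemma~\ref{lem:2by2} (or Lemma~\ref{lem:normal2by2} when the compression $A_2(t)$ of $A$ to $\mathrm{Span}\{x_0,y(t)\}$ is normal; the two formulas agree when $x_0^*y(t)=0$) with $x=x_0$, $y=y(t(z))$, and the fixed choice $\beta=1$. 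Because $y(t)^*x_0$ is real, $\beta=1$ satisfies the constraint $\beta=\pm y^*x/|x^*y|$ uniformly in $t$ (as the ``$+$'' sign when $y(t)^*x_0>0$ and the ``$-$'' sign when $y(t)^*x_0<0$), so the formula \eqref{eq:lineselect1} is continuous in $t$ and $\lambda$ across every zero of $y(t)^*x_0$. Define $g(z):=h_{t(z)}(\lambda(z))$ and $g(z_0):=x_0$. The seam $t=0\sim t=1$ is consistent because $y(1)=\pm x_0$ gives $h_t(0)\to x_0$ from both sides.

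The hard part will be continuity at $z_0$ itself. Sequences $z_n\to z_0$ split into a radial regime ($\lambda(z_n)\to 1$ with $t(z_n)$ bounded away from $\{0,1\}$) and a tangential regime ($t(z_n)\to 0$ or $1$, so $y(t(z_n))\to\pm x_0$, with $\lambda(z_n)$ possibly bounded in $[0,1]$). In both regimes the constant $C=2\sqrt{\lambda(1-\lambda)(1-|x_0^*y|^2)}$ in \eqref{eq:lineselect1} tends to zero, and a direct estimate gives $h_t(\lambda)\to x_0$ uniformly: radially, the leading $\lambda x_0$ dominates while $(1-\lambda)(\cdots)y(t)$ and $(\sqrt{2}/2)Cv$ vanish in norm; tangentially, the coefficient $y^*x_0+i\sqrt{1-|x_0^*y|^2}$ has modulus one and tends to $\pm 1$, so $(1-\lambda)(y^*x_0+i\sqrt{1-|x_0^*y|^2})y(t)\to(\pm 1)(\pm x_0)(1-\lambda)=(1-\lambda)x_0$, while a calculation shows $(\sqrt{2}/2)Cv=\sqrt{\lambda(1-\lambda)}\bigl(\sqrt{1-|x_0^*y|^2}\,x_0+i(y(t)-(x_0^*y(t))x_0)\bigr)\to 0$. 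Combining these estimates yields $g(z_n)\to x_0=g(z_0)$, completing the continuity argument.
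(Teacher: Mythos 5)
Your overall approach — fix a fully round point $z_0$ on the boundary, use the canonical selection $y(t)$ of $\partial F(A)$, parametrize $F(A)$ by chords from $z_0$, and apply the two-by-two lemmas to each chord — is the same as the paper's, and your seam and $z_0$-continuity observations are fine. But there is a genuine gap at the step where you dispatch each chord to one of the two lemmas. You invoke Lemma~\ref{lem:normal2by2} only ``when the compression $A_2(t)$ is normal,'' and otherwise apply Lemma~\ref{lem:2by2}. But Lemma~\ref{lem:2by2} carries the hypothesis $x^*y\neq 0$, and it is entirely possible to have $x_0^*y(t)=0$ with $A_2(t)$ \emph{non-normal}: for example, take $A=\left[\begin{smallmatrix}0 & 1\\ 0 & 0\end{smallmatrix}\right]$ and $x_0,y(t)$ antipodal boundary preimages; then $x_0\perp y(t)$ yet the compression is $A$ itself, which is not normal. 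In that regime neither lemma applies, and ``the two formulas agree when $x_0^*y(t)=0$'' only tells you the map $g$ is well defined and continuous there, not that $f_A(g(z))=z$. This is precisely the case the paper works hardest to exclude or absorb.

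The paper closes the gap in two steps that your proposal omits. First, it does not pick $z_0$ arbitrarily among fully round points: it requires the tangent line at $z_0$ to be non-parallel to every flat portion of $\partial F(A)$, then rotates so that $z_0$ is the rightmost point. With this choice, $x_0^*y(t)=0$ with $A_2(t)$ non-normal forces the tangent lines at $z_0$ and $f_A(y(t))$ to be parallel (they are antipodal on the nondegenerate ellipse $F(A_2)$), and the choice of $z_0$ guarantees this can only occur for a single $t$, the one whose image is the leftmost point. (Without that restriction on $z_0$, the tangent could be parallel to a flat portion, and $x_0^*y(t)=0$ could persist over an entire interval of $t$, which would sink the next step.) Second, having reduced the bad set to a single chord, the paper does \emph{not} try to verify $f_A\circ g=\mathrm{id}$ there directly; instead it notes that $g$ is continuous on $F(A)$, $f_A\circ g$ is continuous, and $f_A(g(z))=z$ off the exceptional chord, so by continuity $f_A(g(z))=z$ everywhere. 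You need both of these ingredients — the non-parallelism condition on $z_0$, and the density/continuity argument — to turn your sketch into a complete proof.
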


\begin{proof}
Fix a fully round point $z_0 \in \partial F(A)$ such that the tangent line to $z_0$ in $F(A)$ is not parallel to any flat portions of the boundary of $F(A)$. By rotation, we may assume that $z_0$ is the right-most point in $F(A)$. Since there are no corner points, the boundary of $F(A)$ consists of alternating flat and round portions. On each round portion there is a selection of $f_A^{-1}$ corresponding to one of the curves \eqref{eq:p1} or \eqref{eq:p2}. On each flat portion there is a selection corresponding to \eqref{eq:p3}. By traversing these curves in order as described at the beginning of the section, we may choose a continuous path $y:[0,1] \rightarrow \C S^n$ such that $f_A$ is a bijection from $y([0,1))$ to $\partial F(A)$, $y(0) \in f_A^{-1}(z_0)$, $y(1)$ is a multiple of $y(0)$, and $y(0)^*y(t) \in \R$ for all $t \in (0,1)$. 

Let $x_0 = y(0)$, $y = y(t)$.  For any $z \in F(A) \backslash \{z_0 \}$, there is a unique $\lambda \in [0,1)$ and $t \in (0,1)$ such that $z = \lambda z_0 + (1-\lambda) f_A(y(t))$.  Furthermore, the values of $\lambda$ and $t$ vary continuously in $z$.  We will prove that 
\begin{equation} \label{eq:mainselect}
g(z) = \frac{ \lambda x_0 + (1-\lambda) \left(y^*x_0 + i \sqrt{1-|x_0^*y|^2} \right) y + \frac{\sqrt{2}}{2}C v}{\sqrt{C+1}}, 
\end{equation}
where $C = 2 \sqrt{\lambda(1-\lambda)(1-|x_0^*y|^2)}$ and $v = \frac{\sqrt{2}}{2} \left( x_0 + i  \frac{y-(x_0^*y)x_0}{||y-(x_0^*y)x_0||} \right)$, is a continuous selection of $f_A^{-1}$ on $F(A)$.  Note that $g$ is continuous on $F(A) \backslash \{z_0 \}$ by construction, and it is clear that if we set $g(z_0) = x_0$, then $g$ extends continuously to all of $F(A)$.  All that remains is to prove that $g$ is a selection of $f_A^{-1}$.  

Note that the tangent line to $z_0$ is vertical, and since by assumption there are no vertical flat portions of $\partial F(A)$, there is one $t \in (0,1)$ for which $f_A(y(t))$ is the unique leftmost point of $F(A)$. For all other $t \in (0,1)$, the tangent line to $f_A(y(t))$ in $F(A)$ is not vertical. Let $A_2$ be the 2-by-2 compression of $A$ onto the $\span \{x_0, y \}$.  The numerical range of $A_2$ is a convex ellipse (possibly degenerate) and $z_0, f_A(y) \in F(A_2) \subseteq F(A)$. Recall that $F(A_2)$ is a line segment if and only if $A_2$ is normal.  In that case, $z_0$ and $f_A(y)$ must be the endpoints of that line segment, since $z_0$ and $f_A(y)$ distinct points in the boundary of $A$. The endpoints of the line segment are the eigenvalues of the compression $A_2$, and thus $x_0$ and $y$ are the eigenvectors corresponding to those eigenvalues. Applying Lemma \ref{lem:normal2by2} to the compression $A_2$ shows that $g$ is a continuous selection of $f_A^{-1}$ on the line segment from $z_0$ to $f_A(y(t))$.   

When $F(A_2)$ is a non-degenerate ellipse, the tangent lines to $z_0$ and $f_A(y)$ in $F(A_2)$ are the same as the tangent lines to those points in $F(A)$.  For all but one $t \in (0,1)$, this implies that the tangent lines to $z_0$ and $f_A(y)$ are not parallel.  From this we conclude that $x_0^*y \neq 0$, otherwise $x_0$ and $y$ would be distinct eigenvectors of the Hermitian matrix $\re (e^{-i\theta}A_2)$ for some rotation angle $\theta$, and their tangent lines would be parallel. Therefore the conditions of Lemma \ref{lem:2by2} apply, and show that $g$ is a selection of $f_A^{-1}$ on all of $F(A)$, except, perhaps, for the line segment from $z_0$ to the leftmost point in $F(A)$. However, the continuity of $g$ implies that $f_A(g(z)) = z$ must hold for all $z \in F(A)$.   
\end{proof}

When the boundary contains a corner point, it is easier to define a continuous selection. 
\begin{theorem} \label{thm:cornerpt}
Let $A \in M_n(\C)$ be a matrix such that $F(A)$ has no weak continuity failures, and there is a corner point $z_0 \in \partial F(A)$.  Then there is a continuous selection of $f_A^{-1}$ on $F(A)$.  
\end{theorem}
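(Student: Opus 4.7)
The plan is to adapt the proof of Theorem \ref{thm:nocornermain}, using the corner point $z_0$ itself as the central reference for a star-shaped parametrization. Because $z_0$ is a corner of $F(A)$, there is a nontrivial interval of arguments $\theta$ for which $\re(e^{-i\theta}A)$ shares the same maximal eigenvector, so $f_A^{-1}(z_0)$ consists of a single vector up to unimodular scaling; fix one such representative $x_0$.

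Using the construction at the beginning of this section together with the hypothesis that $f_A^{-1}$ has no weak continuity failures, produce a continuous canonical selection $y:[0,1] \to \C S^n$ of $\partial F(A)$ with $y(0) = x_0$, $y(1)$ a unimodular multiple of $x_0$, and $x_0^* y(t) \in \R$ for all $t$, arranged so that $y$ traverses the two flat portions adjacent to $z_0$ at the start and end. Since $z_0$ is a corner of the convex set $F(A)$, for every $z \in F(A) \setminus \{z_0\}$ the ray from $z_0$ through $z$ meets $\partial F(A) \setminus \{z_0\}$ at a unique point, yielding a unique pair $(\lambda, t) \in [0,1) \times (0,1)$ with $z = \lambda z_0 + (1-\lambda) f_A(y(t))$ that depends continuously on $z$. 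Define $g(z)$ via formula \eqref{eq:mainselect} with $x_0$ and $y = y(t)$, and set $g(z_0) = x_0$. Continuity of $g$ on $F(A) \setminus \{z_0\}$ is immediate from the formula, and continuity at $z_0$ follows because $\lambda \to 1$ as $z \to z_0$ forces $g(z) \to x_0$.

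To verify that $g$ is a selection of $f_A^{-1}$, consider the $2 \times 2$ compression $A_2$ of $A$ to $\span \{x_0, y(t)\}$. Both $z_0$ and $f_A(y(t))$ lie in $F(A_2) \subseteq F(A)$. If $A_2$ is normal, then $F(A_2)$ is a line segment whose endpoints are eigenvalues of $A_2$ with eigenvectors $x_0$ and $y(t)$, and Lemma \ref{lem:normal2by2} yields $f_A(g(z)) = z$. If $A_2$ is non-normal and $x_0^* y(t) \neq 0$, Lemma \ref{lem:2by2} applies directly. The remaining case, $A_2$ non-normal with $x_0^* y(t) = 0$, can occur only at finitely many $t$, since $x_0^* y(t)$ is piecewise analytic in $t$ with $x_0^* y(0) = 1 \neq 0$, and the zero set of a nonzero real-analytic function is discrete. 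Thus $f_A(g(z)) = z$ holds on a dense open subset of $F(A)$, and continuity of both $g$ and $f_A$ extends the identity to all of $F(A)$.

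The main technical point parallels Theorem \ref{thm:nocornermain}: one must control the degeneracy $x_0^* y(t) = 0$ of the two-dimensional compression. The advantage of the corner case is that $z_0$ has an entire wedge of tangent directions rather than a unique tangent line, so no delicate advance placement of the central point is required and the density argument alone handles the finitely many exceptional $t$ cleanly, which is why the corner case is genuinely easier than the one treated in Theorem \ref{thm:nocornermain}.
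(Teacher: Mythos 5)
Your overall plan — anchor a star‑shaped parametrization at $z_0$ and reduce to $2\times 2$ compressions — is in the right spirit, but several of the load‑bearing claims are false, and you miss the structural fact that makes the corner case genuinely easy.

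The paper's proof uses a theorem of Donoghue: a corner point $z_0$ of $\partial F(A)$ is necessarily the image of a \emph{normal eigenvector} $x_0$ of $A$. Consequently, $A$ splits as $z_0x_0x_0^*\oplus A'$ on $x_0\oplus x_0^\perp$, and the preimages $x(t)$ of the boundary arc from $z_1$ to $z_{k-1}$ opposite $z_0$ all lie in $x_0^\perp$. Every compression to $\span\{x_0,x(t)\}$ is therefore a $2\times 2$ \emph{diagonal} matrix with $x_0,x(t)$ as eigenvectors, and the selection is simply $g(z)=\sqrt{\lambda}\,x_0+\sqrt{1-\lambda}\,x(t)$, which one checks directly satisfies $f_A(g(z))=z$. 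No appeal to Lemma~\ref{lem:2by2}, the full formula~\eqref{eq:mainselect}, or any density argument is needed.

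Against that, your write‑up has three concrete problems. First, the claim that the ray from $z_0$ through $z$ meets $\partial F(A)\setminus\{z_0\}$ at a \emph{unique} point is false precisely when $z$ lies on one of the two flat portions adjacent to $z_0$: the ray contains the whole flat segment. Because you parametrize $y(t)$ over those flat portions at the start and end, the pair $(\lambda,t)$ is not uniquely determined for such $z$, so $g$ as you define it is not well-defined; you would need to parametrize only the arc from $z_1$ to $z_{k-1}$ (as the paper does) so that the ``exit point'' determines $t$. Second, your density argument rests on the claim that $x_0^*y(t)=0$ occurs at only finitely many $t$. In fact $x_0^*y(t)\equiv 0$ on the entire arc interval, because (via Donoghue) every preimage of a point on the arc opposite the corner lies in $x_0^\perp$; the piecewise‑analytic function $x_0^*y(t)$ vanishes identically on one of its analytic pieces, so the ``discrete zero set'' argument does not apply. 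Third, when $y(t)$ sits in a flat portion adjacent to $z_0$ you have $x_0^*y(t)\neq 0$ and $A_2$ is normal, but $y(t)$ is \emph{not} an eigenvector of $A_2$ there, so the step ``$A_2$ normal $\Rightarrow$ endpoints have eigenvectors $x_0$ and $y(t)$, apply Lemma~\ref{lem:normal2by2}'' does not follow; that lemma requires $x,y$ to actually be eigenvectors. (Your ``remaining case'' of a non‑normal $A_2$ with $x_0^*y(t)=0$ is in fact vacuous: orthogonality to a normal eigenvector forces the compression to be diagonal, hence normal.) The reason the corner case is easier is not the ``wedge of tangent directions'' but the normality of $x_0$, which forces the orthogonality and normality that you were trying to get by generic‑position and density arguments.
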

\begin{proof}
As a corner point, $z_0$ must be the image of a normal eigenvector $x_0$ under the action of $f_A$ \cite[Theorem 1]{donoghue1957}.  It is also the endpoint of two flat portions of the boundary.  Assume the other endpoints are $z_1$ and $z_{k-1}$ respectively.  Since there are no weak continuity failures, there is a continuous selection of the arc of the boundary from $z_1$ to $z_{k-1}$ opposite $z_0$.  Let $x(t)$ denote the vectors in this selection, with $t \in [1,k-1]$, so that $z(t) = f_A(x(t))$ parametrizes the arc of the boundary, and $z(1) = z_1$ and $z(k-1) = z_{k-1}$.  Since $x_0$ is a normal eigenvector, the compression of $A$ onto the subspace $\span \{x_0, x(t) \}$ must always be a normal matrix, with $x_0 \perp x(t)$ for all $t$.  Note that every $z \in F(A) \backslash \{z_0 \}$ can be written uniquely as 
$$
z = \lambda z_0 + (1-\lambda) z(t),
$$
where $\lambda \in [0,1)$ and $t \in [1,k-1]$ are continuous functions of $z$.  Let 
$$g(z) = \sqrt{\lambda} \, x_0 + \sqrt{1-\lambda} \, x(t).$$
By construction, $g$ is continuous on $F(A)\backslash \{z_0\}$, and if we define $g(z_0) = x_0$, then $g$ extends continuously to all of $F(A)$. Since $x_0$ is a normal eigenvector of $A$ and is orthogonal to $x(t)$, it follows that $f_A(g(z)) = z$ as desired, so $g$ is a continuous selection of $f_A^{-1}$ on $F(A)$.  
\end{proof}

Note that Theorem \ref{thm:cornerpt} covers the case when $A$ is any normal matrix.  

\section{Selections with Weak Continuity Failures}

\begin{theorem} \label{thm:weakcase}
Suppose that $A \in M_n(\C)$ and $f_A^{-1}$ is weakly continuous on $F(A)$ except at the points $w_1, \ldots, w_k \in \partial F(A)$. For any open set $G$ containing $\{w_1, \ldots, w_k\}$ there is a continuous selection of $f_A^{-1}$ on $F(A) \backslash G$.     
\end{theorem}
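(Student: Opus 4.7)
The plan is to adapt the construction of Theorem \ref{thm:nocornermain} by replacing the canonical parametrization of $\partial F(A)$ with a modified closed curve $\gamma$ in $F(A)$ that bypasses each weak continuity failure $w_j$ by cutting a chord through the interior. Since any continuous selection on $F(A) \setminus G'$ for $G' \subseteq G$ still containing $\{w_1,\ldots,w_k\}$ restricts to one on $F(A) \setminus G$, I may first replace $G$ by a union of small pairwise disjoint convex open disks $B_j$ centered at the $w_j$, chosen so that each $B_j$ contains no other weak continuity failures and $\partial F(A) \cap B_j$ is a single analytic sub-arc of a critical curve. After rotating $A$, I choose a base point $z_0 \in \partial F(A) \setminus G$ that is fully round, is not a weak continuity failure, and whose tangent line is not parallel to any flat portion of $\partial F(A)$, and take $z_0$ to be the rightmost point of $F(A)$.

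Next I would construct $\gamma: [0,1] \to F(A)$ together with a continuous pre-image $y: [0,1] \to \C S^n$ satisfying $f_A \circ y = \gamma$. For each $j$, let $p_j^-, p_j^+ \in \partial F(A) \cap \partial B_j$ denote the two points where $\partial F(A)$ leaves $B_j$. On the complementary boundary arcs, the canonical selection from Section 3 provides a continuous $y$ satisfying $x_0^* y(t) \in \R$ (where $x_0 = y(0)$), yielding in particular selection values $x_j^\pm$ at each $p_j^\pm$. I replace the arc $\partial F(A) \cap B_j$ with the straight-line chord from $p_j^-$ to $p_j^+$ and extend $y$ across this chord by applying Lemma \ref{lem:2by2} (or Lemma \ref{lem:normal2by2}) to the 2-by-2 compression of $A$ onto $\text{Span}\{x_j^-, x_j^+\}$, adjusting phases so the pieces glue continuously at each $p_j^\pm$.

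The region $F' \subseteq F(A)$ bounded by $\gamma$ equals the intersection of $F(A)$ with the closed half-planes cut by each chord, hence is convex, and $F' \supseteq F(A) \setminus G$ because each convex cap cut off by a chord lies inside the convex disk $B_j$. As in Theorem \ref{thm:nocornermain}, each $z \in F' \setminus \{z_0\}$ has a unique representation $z = \lambda z_0 + (1-\lambda)\gamma(t)$ with $\lambda \in [0,1)$ and $t$ varying continuously in $z$, so the formula \eqref{eq:mainselect} (extended by $g(z_0) = x_0$) defines a continuous $g$ on $F'$; the identity $f_A(g(z)) = z$ follows by applying Lemma \ref{lem:2by2} or Lemma \ref{lem:normal2by2} to the 2-by-2 compression of $A$ onto $\text{Span}\{x_0, y(t)\}$. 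Restricting $g$ to $F(A) \setminus G$ yields the required selection.

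The main obstacle is ensuring that both uses of Lemma \ref{lem:2by2} have their non-degeneracy hypotheses satisfied, namely non-normality of the relevant 2-by-2 compression and nonvanishing of the relevant inner product, and that the reality condition $x_0^* y(t) \in \R$ needed by \eqref{eq:mainselect} is preserved along the chord detours. Degenerate configurations can be avoided by shrinking or slightly translating $B_j$, by substituting Lemma \ref{lem:normal2by2} when a compression happens to be normal, or by subdividing a problematic chord through an auxiliary interior point of $F(A)$; as a last resort, one replaces \eqref{eq:mainselect} with a direct invocation of Lemma \ref{lem:2by2} applied pointwise to the compression onto $\text{Span}\{x_0, y(t)\}$ with the parameter $\beta$ chosen to vary continuously in $z$ rather than fixed at $\beta = 1$.
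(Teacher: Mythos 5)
Your plan to cut a chord of $F(A)$ across each neighborhood of a weak-continuity failure, parametrize it via Lemma \ref{lem:2by2} applied to the compression onto $\span\{x_j^-,x_j^+\}$, and then fill in the region between $z_0$ and the modified curve using \eqref{eq:mainselect}, has a genuine gap at the final step. The verification that \eqref{eq:mainselect} satisfies $f_A(g(z))=z$ proceeds by applying Lemma \ref{lem:2by2} (or Lemma \ref{lem:normal2by2}) to the $2\times 2$ compression $A_2$ of $A$ onto $\span\{x_0,y(t)\}$, and Lemma \ref{lem:2by2} requires that $f_A(x_0)$ and $f_A(y(t))$ both lie on $\partial F(A_2)$ --- this is exactly what forces $\psi(vv^*)$ into the nullspace of $B$ in that lemma's proof. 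When $y(t)$ parametrizes $\partial F(A)$, the condition holds automatically because $\partial F(A)\cap F(A_2)\subseteq\partial F(A_2)$. But for $t$ on your chord, $f_A(y(t))$ is an interior point of $F(A)$, and $y(t)$ was built inside $\span\{x_j^-,x_j^+\}$, not $\span\{x_0,y(t)\}$; there is no reason for $f_A(y(t))$ to be a boundary point of $F(A_2)$, and generically it will not be. So the identity $f_A(g(z))=z$ does not follow on the triangles between $z_0$ and the chord.

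The paper's proof avoids this by replacing the chord with a boundary arc of the $3\times 3$ compression $A^{(j)}_3$ of $A$ onto $\span\{x_0,u_j^-,u_j^+\}$. Because a $3\times 3$ numerical range admits no weak continuity failures, the construction of Theorem \ref{thm:nocornermain} applies to $A^{(j)}_3$ directly; and because every point $f_A(\gamma_j(t))$ lies on $\partial F(A^{(j)}_3)$ while $z_0 \in \partial F(A^{(j)}_3)$ as well, each $2\times 2$ sub-compression onto $\span\{x_0,\gamma_j(t)\}$ again has both relevant points on its boundary, restoring the hypothesis of Lemma \ref{lem:2by2}. This $3\times 3$ compression step is the idea your argument is missing, and it is not something that can be patched by shrinking $B_j$, subdividing the chord, or letting $\beta$ vary: the obstruction is structural, not a degeneracy. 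The paper also has to rule out that one of $w_j^\pm$ is the possible lone strong-continuity failure of $f_{A^{(j)}_3}^{-1}$ (handled by perturbing the choice of $w_j^\pm$), a point your degeneracy discussion gestures toward but does not resolve.
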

\begin{proof}
We will separate the proof into two cases. In the first case, suppose that $F(A)$ has no corner points. Rotate $F(A)$ so that there are no vertical flat portions and no exceptional point on the boundary has a vertical tangent.  Let $z_0$ denote the rightmost point in $F(A)$.  
As in the proof of Theorem \ref{thm:nocornermain}, we may construct a path $y:[0,1] \rightarrow \C S^n$ such that $f_A$ is a bijection from $y([0,1))$ to $\partial F(A)$, $y(0) \in f_A^{-1}(z_0)$, $y(1)$ is a scalar multiple of $y(0)$, and $y(0)^*y(t) \in \R$ for all $t \in (0,1)$. Unfortunately, $y(t)$ cannot be continuous at points corresponding to weak-continuity failures of $f_A^{-1}$, but we may construct $y(t)$ so that it is continuous everywhere else.  

Choose an $\epsilon$-neighborhood around each $w_j$ such that each neighborhood is contained in $G$ and each neighborhood only contains one exceptional point of $\partial F(A)$, namely the corresponding $w_j$.  In each neighborhood, chose $w_j^+$ and $w_j^- \in \partial F(A)$ on either side of $w_j$. There exist $t_j^+$ and $t_j^-$ such that $w_j^\pm = f_A(y(t_j^\pm))$.  We will replace $y(t)$ on the interval $[t_j^-,t_j^+]$ with an alternative path that is continuous in $\C S^n$.  

As in the proof of Theorem \ref{thm:nocornermain}, it will be convenient to let $x_0 = y(0)$. Let $u_j^\pm = y(t_j^\pm)$, and consider the 3-by-3 compression $A^{(j)}_3$ of $A$ corresponding to $\span \{x_0,u_j^+,u_j^- \}$.  By construction $F(A^{(j)}_3) \subset F(A)$. Since $A^{(j)}_3$ is only 3-by-3, the map $f_{A^{(j)}_3}^{-1}$ has no weak continuity failures on $F(A_3)$ \cite[Theorem 11]{CJKLS2}. For each $z \in F(A^{(j)}_3)$, the map $f_{A^{(j)}_3}^{-1}(z) \subset f_A^{-1}(z)$, since $f_{A^{(j)}_3}^{-1}$ takes values in $\span \{x_0, u_j^+,u_j^- \} \cap \C S^n$.  Thus, by finding a continuous selection of each $f_{A^{(j)}_3}^{-1}$ on the convex hull $\conv \{z_0, w_j^+, w_j^-\}$ that agrees with the construction of a continuous selection given in the proof of Theorem \ref{thm:nocornermain} on the line segments from $z_0$ to $w_j^+$ and $w_j^-$, we will find a continuous selection of $f_A^{-1}$ that applies to all of $F(A)$ except for the $\epsilon$-neighborhoods around the weak continuity failure points $w_j$.  In particular, the selection is continuous on $F(A)\backslash G$.

It is necessary to ensure that the map $f^{-1}_{A^{(j)}_3}$ is strongly continuous along the boundary of $F(A^{(j)}_3)$ from $w_j^-$ to $w_j^+$ opposite $x_0$.  By \cite[Theorem 11]{CJKLS2}, strong continuity holds at all points in $F(A^{(j)}_3)$, except possibly one exceptional point. This will only be a problem if that one point happens to be either $w_j^+$ or $w_j^-$, since in that case it may not be possible to find a continuous path $\gamma_j:[t_j^-,t_j^+] \rightarrow \C S^n$ such that $\gamma_j(t_j^\pm) = y(t_j^\pm)$, and such that $f_A(\gamma_j(t))$ parametrizes the arc of the boundary of $F(A^{(j)}_3)$ between $w_j^-$ and $w_j^+$.  

As shown in the proof of \cite[Theorem 11]{CJKLS2}, strong continuity fails for one point on the boundary of $F(A^{(j)}_3)$ if and only if $F(A^{(j)}_3)$ is a non-degenerate convex ellipse and $A^{(j)}_3$ has a normal eigenvalue on the boundary.  As mentioned previously, having a normal eigenvalue on the boundary of $F(A^{(j)}_3)$ would not prevent finding a continuous path $\gamma_j(t)$, unless the eigenvalue is either $w_j^\pm$. 

Suppose without loss of generality that this is the case, and that $w_j^+$ is a normal eigenvalue of $A^{(j)}_3$ located on the boundary of the elliptical range $F(A^{(j)}_3)$. In this case the 2-by-2 compression of $A$ onto $\span\{x_0, u_j^-\}$ is the same ellipse as $F(A^{(j)}_3)$. If the arc of the boundary of $F(A)$ from $w_j^+$ to $w_j$ coincides at infinitely many points with an elliptical arc of $\partial F(A^{(j)}_3)$, then the two critical curves are identical analytic curves. In that case, the arc of the boundary of $F(A)$ from $w_j^-$ to $w_j$ must be a different critical curve. By choosing a different $w_j^-$, we may ensure that the boundary of $F(A^{(j)}_3)$ does not coincide with the arc of the boundary of $F(A)$ from $w_j^+$ to $w_j$. Then by choosing a $w_j^+$ closer to $w_j$ if necessary, we may ensure that both $f_{A^{(j)}_3}^{-1}(w_j^\pm)$ are rank 1, and therefore strong continuity of $f_{A^{(j)}_3}^{-1}$ holds at both $w_j^\pm$. We may now construct a continuous selection of $f_{A^{(j)}_3}^{-1}$ on the arc of the boundary of $F(A^{(j)}_3)$ from $w_j^-$ to $w_j^+$ using \eqref{eq:p1} or \eqref{eq:p2} for round portions, and \eqref{eq:p3} for flat portions.  By change of variables, we may assume that the curve $\gamma_j$ we obtain has domain $[t_j^-,t_j^+]$ and by construction $\gamma_j(t_j^\pm) = u_j^\pm$.  

Repeating the argument from the proof of Theorem \ref{thm:nocornermain}, we use \eqref{eq:mainselect} with $y$ replaced by $\gamma_j$ to define the continuous selection of $f_{A^{(j)}_3}^{-1}$ and therefore $f_A^{-1}$ on $\conv \{z_0,w_j^-,w_j^+\}$. Note that we do not need to worry about $\gamma_j(t)$ having vertical tangent lines since the slopes of those tangent lines will be arbitrarily close to the slope of the tangent line to $w_j$ in $F(A)$.  Once we have a continous selection of $f_A^{-1}$ on each $\conv \{z_0,w_j^-,w_j^+\}$, the method of Theorem \ref{thm:nocornermain} extends those selections to a continuous selection of $f_A^{-1}$ on $F(A)\backslash G$.  

In the case where $F(A)$ has a corner point, the argument above can be simplified since we no longer need to worry if the points $w_j^\pm$ are strong continuity failure points of $f_{A^{(j)}_3}^{-1}$. We simply apply the technique of the proof of Theorem \ref{thm:cornerpt} directly to $F(A^{(j)}_3)$ to obtain a continuous selection of $f_{A^{(j)}_3}^{-1}$ on $\conv \{z_0, w_j^-, w_j^+\}$ which extends via the method of Theorem \ref{thm:cornerpt} to a continuous selection of $f_A^{-1}$ on all of $F(A)\backslash G$.  
\end{proof}

\begin{remark}
One might ask whether a continuous selection of $f_A^{-1}$ can be defined on $F(A)\backslash \{w_1, \ldots, w_k \}$, that is, everywhere except the points where weak continuity fails.  This is currently an open question.  

\end{remark}

\paragraph{\textbf{Acknowledgments.}}  We would like to thank Ilya Spitkovsky for his helpful advice and expertise. Thanks also to Charles Johnson for organizing the summer REU during which this project was started.

\bibliography{ContSelectRef}
\bibliographystyle{plain}

 \end{document}